\numberwithin{equation}{section}
\newcommand{\AutoAbs}[1]{\left\vert #1 \right\vert}
\newcommand{\AutoNorm}[1]{\left\Vert #1 \right\Vert}
\newtheorem{thm}{Theorem}[section]
\newtheorem{lem}{Lemma}[section]
\newcommand{\be}{\begin{eqnarray}}
\newcommand{\ee}{\end{eqnarray}}
\title[Convergence Proof for Godunov Method in GR]
{A Proof Of Convergence for Numerical Approximations Generated by the Locally Inertial Godunov Method in General Relativity}
\author[Z.\ Vogler]{Zeke Vogler}
\thanks{This work summarizes results credited to Vogler's doctoral dissertation, \cite{vogl}, which was supervised by Blake Temple. Both authors were partially supported by second author's NSF Grant, where the problem was first proposed.}
\address{Department of Mathematics\\ University of California\\ Davis, CA 95616\\ USA}
\email{zekius@math.ucdavis.edu}
\author[B.\ Temple]{Blake Temple \\ \\ December 2011}
\address{Department of Mathematics\\ University of California\\ Davis, CA 95616\\ USA}
 \email{temple@math.ucdavis.edu}
\begin{document}

\maketitle
%$$\begin{array}{ccc}
%Zeke\ Vogler\footnotemark[1]  &  Blake\ Temple\footnotemark[2] \nonumber \end{array}$$ 
%\footnotetext[1]{This work summarizes results that first appeared in Vogler's doctoral dissertation, \cite{vogl}}

%\footnotetext[2]{Department of Mathematics, University of California, Davis, Davis CA
%95616; Supported by NSF Applied Mathematics Grant Number DMS-070-7532}

\begin{abstract}
We provide details for the proof of convergence of the  {\it locally inertial Godunov method with dynamical time dilation}, introduced by Vogler in \cite{vogl}. \end{abstract}

\section{Introduction} \label{intro}
\setcounter{equation}{0}

 In this paper we fill in the details in the proof of convergence stated in Section 7 of \cite{voglte}, for the {\it locally inertial Godunov method with dynamic time dilation}, a numerical method for computing shock wave solutions of the Einstein equations in Standard Schwarzschild Coordinates (SSC).   We refer the reader to  \cite{voglte} for an introduction and for the notation assumed at the start here.  
 
 The main conclusion of the theorem stated and proved below is that a sequence of approximate solutions $(u_{\Delta x},\mathbf{A}_{\Delta x})\rightarrow (u,\mathbf{A})$ of the locally inertial Godunov scheme that converge boundedly without oscillation to a limit function $(u,\mathbf{A})$, must be a weak solution of the Einstein Euler equations in SSC, c.f. \cite{voglte,groate},
\begin{equation}\label{ch5_conservation_law_source_cmpt}
\begin{split}
    u_{t}+f&(\mathbf{A},u)_{x}=g(\mathbf{A},u,x),\\
       &\mathbf{A}'=h(\mathbf{h},u,x).
\end{split}
\end{equation}
That is, the theorem reduced the proof that one has an exact solution to the two things most easily established in a numerical simulation: convergence without oscillations.
    
System (\ref{ch5_conservation_law_source_cmpt}) is the locally inertial formulation introduced in \cite{groate}, which is weakly equivalent to the Einstein equations, (see \cite{voglte} for discussion).  The proof here is a modification of the Groah and Temple argument used for the locally inertial Glimm scheme \cite{groate}, with several differences.  Both the Glimm scheme and the Godunov methods employ Riemann problem approximations, but the main difference is that the Godunov method employs averaging rather than random sampling at the end of each time step.   The theorem proved here assumes a total variation bound, (it is still an open problem to prove such a bound), while the Groah and Temple theorem establishes this bound by an argument using wave strengths to bound the total variation of waves in the Riemann problem solutions.  In the theorem here, we also allow variable time steps.  The final difference is that here we include right and left boundary data reflecting the limited extent in space of a computer simulation.

There are two main steps in the proof.  The first is to show the discontinuities in the metric $\mathbf{A}$ along the boundary of Riemann cells are accounted for by the inclusion of the term
\begin{equation}
\mathbf{A}'\cdot\nabla_\mathbf{A}f(\mathbf{A}_{ij},\hat{u},x)
\end{equation}
in the ODE step \cite{voglte}.  The second step is to prove the jump in the approximate solution $u_{\Delta x}$ along the time steps are of order $\Delta x$.  In their work \cite{groate}, Groah and Temple did not need the convergence and total variation assumptions because with the Glimm scheme, these assumptions are proven as long as there exists a total variation bound on the initial data, a truly remarkable feature of the scheme.  In this paper, applicable to the simulations in \cite{vogl,voglte}, these assumptions are natural assumptions that can be verified numerically.  In particular, the theorem is perfectly suited to the numerical simulation of points of shock wave interaction in \cite{vogl,voglte}.   Once one numerically establish convergence and a total variation bound, the theorem here implies convergence to a weak solution of the Einstein equations.

\section{The Convergence Theorem}
The main theorem of this paper is the following:
\begin{thm}\label{thm:weak_soln}
Let $u_{\Delta x}(t,x)$ and $\mathbf{A}_{\Delta x}(t,x)$ be the approximate solution generated by the locally inertial Godunov method starting from the initial data $u_{\Delta x}(t_0,x)$ and $\mathbf{A}_{\Delta x}(t_0,x)$ for $t_0>0$.  Assume these approximate solutions exist up to some time $t_{end}>t_0$ and converge to a solution $(u_{\Delta x},\mathbf{A}_{\Delta x})\rightarrow (u,\mathbf{A})$ as $\Delta x\rightarrow 0$ along with a total variation bound at each time step $t_j$
\begin{equation}\label{total_variation_bound}
T.V._{[r_{min},r_{max}]}\{u_{\Delta x}(t_j,\cdot)\}<V,
\end{equation}
where $T.V._{[r_{min},r_{max}]}\{u_{\Delta x}(t_j,\cdot)\}$ represents the total variation of the function $u_{\Delta x}(t_j,x)$ on the interval $[r_{min},r_{max}]$.  Assume the total variation is independent of the time step $t_j$ and the mesh length $\Delta x$.  Then the solution $(u,\mathbf{A})$ is a weak solution to the Einstein equations (1.26)-(1.29) in \cite{groate}.
\end{thm}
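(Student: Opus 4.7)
The plan is to establish the weak formulation of (\ref{ch5_conservation_law_source_cmpt}) by a Lax--Wendroff style argument adapted to the locally inertial Godunov scheme. Fix a test function $\varphi\in C_0^\infty((t_0,t_{end})\times(r_{min},r_{max}))$; the goal is to show
\[
\iint\bigl\{u\,\varphi_t+f(\mathbf{A},u)\,\varphi_x+g(\mathbf{A},u,x)\,\varphi\bigr\}\,dx\,dt=0
\]
together with the weak form of the ODE for $\mathbf{A}$. The strategy is to evaluate this integral on the approximate solution $(u_{\Delta x},\mathbf{A}_{\Delta x})$ by decomposing the domain into Riemann cells $R_{ij}=[x_{i-1/2},x_{i+1/2}]\times[t_j,t_{j+1})$, integrating by parts on each cell, summing over $(i,j)$, and passing to the limit using dominated convergence together with the total variation bound (\ref{total_variation_bound}).

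Within each cell $R_{ij}$, the function $u_{\Delta x}$ is an exact Riemann-problem solution of $u_t+f(\mathbf{A}_{ij},u)_x=0$ with the metric frozen at $\mathbf{A}_{ij}$, so integration by parts on $R_{ij}$ is exact. The resulting boundary terms split into three kinds of residuals: (i) time-step residuals at $t=t_j$ arising from the Godunov averaging $\hat{u}_{ij}$; (ii) spatial-interface residuals at $x=x_{i\pm1/2}$, where the piecewise constant metric $\mathbf{A}_{\Delta x}$ jumps; and (iii) bulk residuals from the mismatch between the frozen-metric flux $f(\mathbf{A}_{ij},u)$ and the true flux $f(\mathbf{A}_{\Delta x},u)$. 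The first main step is to show that the spatial-interface residuals are compensated by the term $\mathbf{A}'\cdot\nabla_{\mathbf{A}}f(\mathbf{A}_{ij},\hat{u},x)$ built into the ODE sub-step: Taylor expansion gives
\[
f(\mathbf{A}_{i+1,j},u)-f(\mathbf{A}_{ij},u)=(\mathbf{A}_{i+1,j}-\mathbf{A}_{ij})\cdot\nabla_{\mathbf{A}}f+O(\Delta x^2),
\]
and since $\mathbf{A}_{i+1,j}-\mathbf{A}_{ij}=\Delta x\, h(\mathbf{A}_{ij},\hat{u},x_i)+O(\Delta x^2)$ by the ODE update, these interface contributions assemble in the limit into the source $g$ on the right of the first line of (\ref{ch5_conservation_law_source_cmpt}).

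The second main step is to show the time-step residuals vanish as $\Delta x\to 0$. Godunov averaging is locally conservative, $\hat{u}_{ij}\,\Delta x=\int_{x_{i-1/2}}^{x_{i+1/2}}u_{\Delta x}(t_j^-,x)\,dx$, so subtracting the cell-averaged test value $\bar\varphi_{ij}$ from $\varphi(t_j,\cdot)$ produces exact cancellation, and the per-cell residual is bounded by $C\,\Delta x\cdot\sup_{x\in[x_{i-1/2},x_{i+1/2}]}|u_{\Delta x}(t_j^-,x)-\hat{u}_{ij}|$. Summing over $i$, the total residual at time $t_j$ is then controlled by $C\,\Delta x\cdot T.V._{[r_{min},r_{max}]}\{u_{\Delta x}(t_j,\cdot)\}\le C\,\Delta x\cdot V$, which vanishes in the limit. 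The weak form of the ODE $\mathbf{A}'=h(\mathbf{A},u,x)$ then follows more directly: the spatial update is a forward-Euler discretization of that ODE, and continuity of $h$ together with the assumed convergence of $(u_{\Delta x},\mathbf{A}_{\Delta x})$ passes the discrete equation into its integral form.

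The principal obstacle, and the heart of the second step highlighted in the introduction, is establishing that the jump of $u_{\Delta x}$ in $x$ across an interface $x_{i\pm 1/2}$ at time $t_j$ is $O(\Delta x)$ per cell uniformly in $i,j$. Only with this control can the per-cell residual bound above be upgraded from $O(1)$ to $O(\Delta x)$ after summation, so that the total variation hypothesis (\ref{total_variation_bound}) converts the sum of local jumps into a globally vanishing quantity of size $O(\Delta x\cdot V)$. Proving this reduction will require a delicate combination of finite propagation speed for the Riemann fan within a cell, Lipschitz dependence of the Riemann solver on its states, and careful bookkeeping of how the metric ODE step perturbs the values between successive time levels—and it is here, rather than in the Lax--Wendroff bookkeeping itself, that the bulk of the technical work will lie.
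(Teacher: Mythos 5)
Your overall framework is the same as the paper's: a Lax--Wendroff style residual computation over Riemann cells, with the spatial-interface flux jumps $f(\mathbf{A}_{i+1,j},u)-f(\mathbf{A}_{ij},u)$ cancelled against the $\mathbf{A}'\cdot\nabla_{\mathbf{A}}f$ term built into the ODE sub-step, and the remaining error concentrated in the time-level jumps produced by Godunov averaging. (One small misstatement: those interface contributions are \emph{cancelled} by the $-\mathbf{A}'\cdot\nabla_{\mathbf{A}}f$ part of the modified source $G=g-\mathbf{A}'\cdot\nabla_{\mathbf{A}}f$; they do not ``assemble into $g$'' --- the source $g$ enters separately through the ODE step.)

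The genuine gap is in your estimate of the time-step residual, which is the heart of the proof. You bound the residual at a single time level $t_j$ by $C\Delta x\cdot T.V._{[r_{min},r_{max}]}\{u_{\Delta x}(t_j,\cdot)\}\le C\Delta x\,V$ and declare that this ``vanishes in the limit,'' but you never sum over $j$: there are $O(T/\Delta t)=O(1/\Delta x)$ time levels, so your bound yields a total of $O(VT)=O(1)$, which does not vanish. To close the argument you need the per-cell bound $C\AutoNorm{\varphi}_\infty\Delta x\,\Delta t\,T.V._{[x_{i-},x_{i+}]}\{u_{\Delta x}(t_j,\cdot)\}$ --- an extra factor of $\Delta t$ --- which is exactly the content of the paper's Lemma \ref{lem:avg_vs_rp_estimate}. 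That extra factor comes from two separate observations you only partially invoke: (i) freezing the test function at $x_i$ makes the leading term $\int_{x_{i-}}^{x_{i+}}(\bar{u}-u^{RP}_{\Delta x})\,dx$ vanish \emph{identically} by the definition of the average, with a Taylor remainder of size $\AutoNorm{\varphi_x}_\infty\Delta x\cdot\Delta x\cdot T.V._{cell}$; and (ii) the surviving discrepancy between $\hat{u}(\cdot,\bar{u})$ and $\hat{u}(\cdot,u^{RP}_{\Delta x})$ lives entirely in the ODE correction $\int_{t_{j-1}}^{t_j}(G(\bar{u})-G(u^{RP}_{\Delta x}))\,dt$, which carries the factor $\Delta t$ and is controlled by $\sup|\bar{u}-u^{RP}_{\Delta x}|\le T.V._{cell}$ (the paper's Lemma \ref{lem:avg_bound}). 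Only then does the double sum over $i$ and $j$ give $O(\Delta x\,\Delta t\,V\cdot T/\Delta t)=O(\Delta x)$. Relatedly, your closing paragraph proposes to establish that the jump of $u_{\Delta x}$ is ``$O(\Delta x)$ per cell uniformly in $i,j$'': this is not the right target. The relevant jump is the jump \emph{in time} across $t_j$, it is not pointwise small (a shock passing through a cell makes $\sup|u_{\Delta x}(t_j^-,x)-\bar{u}|$ of order one), and no uniform pointwise bound is needed --- the mechanism is the exact integral cancellation plus the total variation hypothesis, not pointwise smallness.
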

\begin{proof}
Suppose we have approximate solutions $(u_{\Delta x},\mathbf{A}_{\Delta x})$ obtained by the locally inertial Godunov method that satisfy the hypothesis of the theorem.  Having a total variation bound at each time $t_j$ places a total variation bound on the inputs to all the Riemann problems posed at that time.  In \cite{groate}, Groah and Temple show a total variation bound on the inputs implies a total variation bound on the solution to the Riemann problem for any time $t$ such that $t_j\leq t<t_{j+1}$.  By the self similarity of the solution to the Riemann problem, this result also implies a total variation bound for any space coordinate within the Riemann cell.  More specifically, we have the following bounds:
\begin{equation}
T.V._{[x_{i-1},x_i]}\{u_{\Delta x}(t,\cdot)\}<V,
\end{equation}
and
\begin{equation}
T.V._{[t_j,t_{j+1})}\{u_{\Delta x}(\cdot,x)\}<V,
\end{equation}
for any $x$ and $t$ within the Riemann cell $R_{ij}$.

All the functions $f$, $G$, and $g$ derived in \cite{groate} are smooth, and it is the metric that is only Lipschitz continuous.  The smoothness of these functions is used throughout this proof.

Let $T=t_{end}-t_0$ be the overall time of the solution, and for each mesh length $\Delta x$ define the minimum time length
\begin{equation}
\Delta t\equiv \min_j\{\Delta t_j\}
\end{equation}
as the minimum over all the time lengths defined in \cite{voglte}.  By definition, this time length is proportional to the mesh length, $\Delta t \propto \Delta x$, implying $O(\Delta t)=O(\Delta x)$, and there exists a constant $C$ bounding all the time lengths, $\Delta t_j < C\Delta t$ for all $j$.  Throughout this section, let $C$ be a generic constant only depending on the bounds for the solution $[t_0,t_{end}]\times [r_{min},r_{max}]$.  This variable is created to unify all the time steps, and more importantly, used to calculate the maximum number of time steps needed to go from $t_0$ to $t_{end}$.

We now follow the development of Groah and Temple in \cite{groate}.  Recall from \cite{voglte}, $u^{RP}_{\Delta x}(t,x)$ denotes the collection of the exact solutions in all the Riemann cells $R_{ij}$ for the Riemann problem of the homogenous system
\begin{equation}
u_{t}+f(\mathbf{A}_{ij},u)_{x}=0.
\end{equation}
So $u^{RP}_{\Delta x}(t,x)$ satisfies the weak form of this conservation law in each Riemann cell
\begin{equation}\label{rp_soln_weak_form}
\begin{split}
0=&\int\int_{R_{ij}}\left\{-u^{RP}_{\Delta x}\varphi_t - f(\mathbf{A}_{ij},u^{RP}_{\Delta x})\varphi_x\right\} dxdt \\
&+ \int_{R_i}\left\{u^{RP}_{\Delta x}(t_{j+1},x)\varphi(t_{j+1},x)-u^{RP}_{\Delta x}(t^+_j,x)\varphi(t_j,x)\right\}dx \\
&+\int_{R_j}\left\{f(\mathbf{A}_{ij},u^{RP}_{\Delta x}(t,x_{i})) \varphi(t,x_{i})\right.\\
&\left.-f(\mathbf{A}_{ij},u^{RP}_{\Delta x}(t,x_{i-1}))\varphi(t,x_{i-1})\right\}dt,
\end{split}
\end{equation}
where $\varphi$ is a smooth test function with $Supp(\varphi)\subset [t_0,t_{end})\times[a,b]$ for $a<r_{min}<r_{max}<b$.

Remember from \cite{voglte}, $\hat{u}(t,u_0)$ denotes the solution to the ODE
\begin{equation}\label{ode_soln}
\begin{split}
\hat{u}_t=G(\mathbf{A}_{ij},\hat{u},x)=g&(\mathbf{A}_{ij},\hat{u},x)- \mathbf{A}'\cdot\nabla_\mathbf{A}f(\mathbf{A}_{ij},\hat{u},x), \\
&\hat{u}(0)=u_0.
\end{split}
\end{equation}
Therefore,
\begin{equation}
\hat{u}(t,u_0)=u_0+\int^t_0\left\{g(\mathbf{A}_{ij},\hat{u}(\xi,u_0),x) -\mathbf{A}'\cdot\nabla_\mathbf{A}f(\mathbf{A}_{ij},\hat{u}(\xi,u_0),x)\right\}d\xi.
\end{equation}
Also, recall from \cite{voglte} $u_{\Delta x}$ denotes the approximate solution obtained using the fractional step method.  Since our fractional method takes the Riemann problem solution and feeds it into the ODE step, $u_{\Delta x}$ is defined on every Riemann cell $R_{ij}$ as
\begin{equation}\label{frac_step_approx_soln}
\begin{split}
u_{\Delta x}(t,x)=u^{RP}_{\Delta x}(t,x) +\int^t_{t_j}&\left\{g(\mathbf{A}_{ij},\hat{u}(\xi-t_j,u^{RP}_{\Delta x}(t,x)),x)\right.\\
-&\frac{\partial f}{\partial \mathbf{A}}\left.(\mathbf{A}_{ij},\hat{u}(\xi-t_j,u^{RP}_{\Delta x}(t,x)))\cdot\mathbf{A}'_{\Delta x}\right\}d\xi.
\end{split}
\end{equation}
This expression implies the error between the approximate solution and the Riemann problem solution is on the order of $\Delta x$; a fact that is repeatedly used throughout the proof.

Define the residual $\varepsilon=\varepsilon(u_{\Delta x},\mathbf{A}_{\Delta x},\varphi)$ of $u_{\Delta x}$ and $\mathbf{A}_{\Delta x}$ as the error of the solution in satisfying the weak form of the conservation law (\ref{ch5_conservation_law_source_cmpt}) by
\begin{equation}\label{residual}
\begin{split}
\varepsilon(u_{\Delta x},\mathbf{A}_{\Delta x},\varphi)\equiv&\int^{r_{max}}_{r_{min}}\int^{t_{end}}_{t_0}\left\{-u_{\Delta x}\varphi_t - f(\mathbf{A}_{\Delta x},u_{\Delta x})\varphi_x - g(\mathbf{A}_{\Delta x},u_{\Delta x},x)\varphi\right\}dxdt\\
&- I_1 - I_2\\
&\sum^{i=n+1}_{i=1,j}\int_{R_{ij}}\left\{-u_{\Delta x}\varphi_t - f(\mathbf{A}_{ij},u_{\Delta x})\varphi_x - g(\mathbf{A}_{ij},u_{\Delta x},x)\varphi\right\}dxdt\\
&- I_1 - I_2,
\end{split}
\end{equation}
where
\begin{equation}
I_1\equiv\int^{r_{max}}_{r_{min}}u_{\Delta x}(t^+_0,x)dx=\sum^{n+1}_{i=1}\int_{R_i}u_{\Delta x}(t^+_0,x)dx,
\end{equation}
and
\begin{equation}
\begin{split}
I_2\equiv&\int^{t_{end}}_{t_0}\left\{f(\mathbf{A}_{ij},u_{\Delta x}(t,r^+_{min}))\varphi(t,r^+_{min}) - f(\mathbf{A}_{ij},u_{\Delta x}(t,r^+_{max}))\varphi(t,r^+_{max})\right\}dt\\
=&\sum_j\int_{R_j}\left\{f(\mathbf{A}_{ij},u_{\Delta x}(t,r^+_{min}))\varphi(t,r^+_{min}) - f(\mathbf{A}_{ij},u_{\Delta x}(t,r^+_{max}))\varphi(t,r^+_{max})\right\}dt,
\end{split}
\end{equation}
The expression $\sum^{i=n+1}_{i=1,j}$ denotes a double sum where the index $i$ runs across all the spatial gridpoints, and the index $j$ runs across all the temporal gridpoints.  Recall from \cite{voglte}, $n$ is the number of spatial gridpoints, and there are $n+1$ Riemann cells.  Our goal is to show $\varepsilon(u_{\Delta x},\mathbf{A}_{\Delta x},\varphi)=O(\Delta x)$ because if the approximation converges $(u_{\Delta x},\mathbf{A}_{\Delta x})\rightarrow (u,\mathbf{A})$ as $\Delta x\rightarrow 0$, then the limit function satisfies the condition of being a weak solution to the Einstein equations $\varepsilon(u,\mathbf{A},\varphi)=0$.

Substituting (\ref{frac_step_approx_soln}) into (\ref{residual}) gives us
\begin{equation}\label{residual_with_soln}
\begin{split}
\varepsilon=&\sum^{i=n+1}_{i=1,j}\int\int_{R_{ij}}\left\{-u^{RP}_{\Delta x}\varphi_t - f(\mathbf{A}_{ij},u_{\Delta x})\varphi_x - g(\mathbf{A}_{ij},u_{\Delta x},x)\varphi\right.\\
&- \varphi_t\int^t_{t_j}\left[g(\mathbf{A}_{ij},\hat{u}(\xi-t_j,u^{RP}_{\Delta x}(t,x)),x)\right.\\
&-\frac{\partial f}{\partial\mathbf{A}}\left.\left.(\mathbf{A}_{ij},\hat{u}(\xi-t_j,u^{RP}_{\Delta x}(t,x)))\cdot\mathbf{A}_{\Delta x}'\right]d\xi\right\}dxdt - I_1 - I_2.
\end{split}
\end{equation}
Define
\begin{equation}
\begin{split}
I^1_{ij}(t,x)\equiv&\int^t_{t_j}\left[g(\mathbf{A}_{ij},\hat{u}(\xi-t_j,u^{RP}_{\Delta x}(t,x)),x)\right.\\
&-\frac{\partial f}{\partial\mathbf{A}}\left.(\mathbf{A}_{ij},\hat{u}(\xi-t_j,u^{RP}_{\Delta x}(t,x)))\cdot\mathbf{A}_{\Delta x}'\right]d\xi
\end{split}
\end{equation}
Plugging the weak form of the conservation law (\ref{rp_soln_weak_form}) of each grid rectangle into (\ref{residual_with_soln}) gives us
\begin{equation}\label{residual_mid_step}
\begin{split}
\varepsilon=&\sum^{i=n+1}_{i=1,j}\int\int_{R_{ij}}\left\{\varphi_x[f(\mathbf{A}_{ij},u^{RP}_{\Delta x})-f(\mathbf{A}_{ij},u_{\Delta x})] - g(\mathbf{A}_{ij},u_{\Delta x},x)\varphi \right. \\
&\phantom{4444444444444444444444}\left.-\varphi_tI^1_{ij}(t,x)\right\}dxdt\\
&-I_1 - \sum^{i=n+1}_{i=1,j}\int_{R_i}\left\{u^{RP}_{\Delta x}(t^-_{j+1},x)\varphi(t_{j+1},x) - u^{RP}_{\Delta x}(t^+_j,x)\varphi(t_j,x)\right\}dx\\
&-I_2 -\sum^{i=n+1}_{i=1,j}\int_{R_j}\left\{f(\mathbf{A}_{ij},u^{RP}_{\Delta x}(t,x_{i}))\varphi(t,x_{i}) - f(\mathbf{A}_{ij},u^{RP}_{\Delta x}(t,x_{i-1}))\varphi(t,x_{i-1})\right\}dt.
\end{split}
\end{equation}
Note
\begin{equation}
\AutoAbs{f(\mathbf{A}_{ij},u^{RP}_{\Delta x})-f(\mathbf{A}_{ij},u_{\Delta x})}\leq C\Delta t
\end{equation}
which implies
\begin{equation}
\begin{split}
&\AutoAbs{\sum^{i=n+1}_{i=1,j}\int\int_{R_{ij}}\varphi[f(\mathbf{A}_{ij},u^{RP}_{\Delta x}) - f(\mathbf{A}_{ij},u_{\Delta x})]dxdt}\\
&\phantom{4444444444444444444}\leq C\AutoNorm{\varphi}_\infty \Delta t^2\Delta x\left(\frac{T}{\Delta t}\right)\left(n+1\right)= O(\Delta x)
\end{split}
\end{equation}
where the number of time steps is proportional to $T/\Delta t$ and the number of space steps is $O(1/\Delta x)$ \cite{voglte}.

Since $u^{RP}_{\Delta x}(t^+_j,x)=u_{\Delta x}(t^+_j,x)$, the following sum is rearranged to become
\begin{equation}
\begin{split}
-I_1 - &\sum^{i=n+1}_{i=1,j}\int_{R_i}\left\{u^{RP}_{\Delta x}(t^-_{j+1},x)\varphi(t_{j+1},x) - u^{RP}_{\Delta x}(t^+_j,x)\varphi(t_j,x)\right\}dx\\
&=\sum_{j\neq0}\int^{r_{max}}_{r_{min}}\left\{u_{\Delta x}(t^+_j,x) - u^{RP}_{\Delta x}(t^-_j,x)\right\}\varphi(t_j,x)dx\\
&=\sum_{j\neq 0}\int^{r_{max}}_{r_{min}}\varphi(t_j,x)\left\{u_{\Delta x}(t^+_j,x) - u_{\Delta x}(t^-_j,x)\right\}dx\\
&+ \sum_{j\neq 0}\int^{r_{max}}_{r_{min}}\varphi(t_j,x)\left\{u_{\Delta x}(t^-_j,x) - u^{RP}_{\Delta x}(t^-_j,x)\right\}dx,
\end{split}
\end{equation}
where the term $u_{\Delta x}(t_j,x)$ is added and subtracted to isolate the jump in the solution $u_{\Delta x}$ across the time step $t_j$.  We define this jump $\varepsilon_1=\varepsilon_1(u_{\Delta x},\mathbf{A}_{\Delta x},\varphi)$ as
\begin{equation}
\varepsilon_1(u_{\Delta x},\mathbf{A}_{\Delta x},\varphi)\equiv\sum_{j\neq0}\int^{r_{max}}_{r_{min}}\varphi(t_j,x)\left\{u_{\Delta x}(t^+_j,x) - u_{\Delta x}(t^-_j,x)\right\}dx,
\end{equation}
and this definition allows us to rewrite (\ref{residual_mid_step}) as
\begin{equation}\label{intro_epsilon1}
\begin{split}
\varepsilon= &\phantom{4}O(\Delta x) + \varepsilon_1 + \sum^{i=n+1}_{i=1,j}\int\int_{R_{ij}}\left\{-g(\mathbf{A}_{ij},u_{\Delta x},x)\varphi - \varphi_tI^1_{ij}(t,x)\right\}dxdt\\
&+ \sum_{j\neq0}\int^{r_{max}}_{r_{min}}\varphi(t,x)\left\{u_{\Delta x}(t^-_j,x) - u^{RP}_{\Delta x}(t^-_j,x)\right\}dx\\
&-I_2 - \sum^{i=n+1}_{i=1,j}\int_{R_j}\left\{f(\mathbf{A}_{ij},u^{RP}_{\Delta x}(t,x_{i}))\varphi(t,x_{i}) - f(\mathbf{A}_{ij}, u^{RP}_{\Delta x}(t,x_{i-1}))\varphi(t,x_{i-1})\right\}dt
\end{split}
\end{equation}
But the last sum is rearranged to cancel the boundary conditions as follows:
\begin{equation}\label{eliminate_i2}
\begin{split}
-I_2 - &\sum^{i=n+1}_{i=1,j}\int_{R_j}\left\{f(\mathbf{A}_{ij},u^{RP}_{\Delta x}(t,x_{i}))\varphi(t,x_{i}) - f(\mathbf{A}_{ij}, u^{RP}_{\Delta x}(t,x_{i-1}))\varphi(t,x_{i-1})\right\}dt\\
=&\sum^{i=n}_{i=1,j}\int_{R_j}\left\{f(\mathbf{A}_{i+1,j},u^{RP}_{\Delta x}(t,x_{i})) - f(\mathbf{A}_{ij},u^{RP}_{\Delta x}(t,x_{i}))\right\}\varphi(t,x_{i})dt\\
+&\sum_{j}\int_{R_j}\left\{f(\mathbf{A}_{1,j},u^{RP}_{\Delta x}(t,x_0)) - f(\mathbf{A}_{1,j},u_{\Delta x}(t,x_0))\right\}\varphi(t,x_0)dt\\
+&\sum_{j}\int_{R_j}\left\{f(\mathbf{A}_{n+1,j},u^{RP}_{\Delta x}(t,x_{n+1})) - f(\mathbf{A}_{n+1,j},u_{\Delta x}(t,x_{n+1}))\right\}\varphi(t,x_{n+1})dt,
\end{split}
\end{equation}
where
\begin{equation}
\begin{split}
&\AutoAbs{\sum_j\int_{R_j}\left\{f(\mathbf{A}_{1,j},u^{RP}_{\Delta x}(t,x_0)) - f(\mathbf{A}_{1,j},u_{\Delta x}(t,x_0))\right\}\varphi(t,x_0)dt}\\
&\leq\AutoNorm{\varphi}_\infty C\Delta t^2\left(\frac{T}{\Delta t}\right)=O(\Delta x),
\end{split}
\end{equation}
and similarly
\begin{equation}
\AutoAbs{\sum_j\int_{R_j}\left\{f(\mathbf{A}_{n+1,j},u^{RP}_{\Delta x}(t,x_{n+1})) - f(\mathbf{A}_{n+1,j},u_{\Delta x}(t,x_{n+1}))\right\}\varphi(t,x_{n+1})dt}=O(\Delta x).
\end{equation}
Note that the resulting double sum in (\ref{eliminate_i2}) lost a term, resulting in only $n$ terms.

To simplify the $I^1_{ij}$ term, we add and subtract a term deviating from it by an order of $\Delta x$, use integration by parts on the new term, and with the result add and subtract another term to reduce the expression further.  To this end, let
\begin{equation}
\begin{split}
I_{\Delta S}\equiv\sum^{i=n+1}_{i=1,j}\int\int_{R_{ij}}\varphi_t\int^t_{t_j} &\left\{g(\mathbf{A}_{ij},\hat{u}(\xi-t_j,u^{RP}_{\Delta x}(\xi,x)),x) - g(\mathbf{A}_{ij},\hat{u}(\xi-t_j,u^{RP}_{\Delta x}(t,x)),x)\right.\\
-\frac{\partial f}{\partial \mathbf{A}}&(\mathbf{A}_{ij},\hat{u}(\xi-t,u^{RP}_{\Delta x}(\xi,x)))\cdot\mathbf{A}_{\Delta x}'\\
+&\frac{\partial f}{\partial \mathbf{A}}\left.(\mathbf{A}_{ij},\hat{u}(\xi-t,u^{RP}_{\Delta x}(t,x)))\cdot\mathbf{A}_{\Delta x}'\right\}d\xi dx dt.
\end{split}
\end{equation}
From the total variation bound on the Riemann problems and the smoothness of $f$, this term is bounded by
\begin{equation}
\begin{split}
\AutoAbs{I_{\Delta S}}&\leq \sum^{i=n+1}_{i=1,j}\int\int_{R_{ij}}\AutoNorm{\varphi_t}_\infty\int^t_{t_j}C\phantom{4}T.V._{[x_{i-1},x_i]}\left\{u_{\Delta x}(\cdot,t_j)\right\}d\xi dxdt\\
&\leq \AutoNorm{\varphi_t}_\infty C\Delta t^2\Delta x\sum_jT.V._{[r_{min},r_{max}]}\{u_{\Delta x}(\cdot,t_j)\}\\
&\leq CV\AutoNorm{\varphi_t}_\infty\Delta x\Delta t^2\frac{T}{\Delta t}=O(\Delta x^2),
\end{split}
\end{equation}
and the above procedure reduces the term to
\begin{equation}\label{splitting_i1_ij}
\begin{split}
&-\int\int_{R_{ij}}\varphi_t I^1_{ij}(t,x)dxdt = I_{\Delta S} - \sum^{i=n+1}_{i=1,j}\int\int_{R_{ij}}\varphi_t\int^t_{t_j}\left\{ g(\mathbf{A}_{ij},\hat{u}(\xi-t_j,u^{RP}_{\Delta x}(\xi,x)),x)\right.\\
&\phantom{444444444}-\frac{\partial f}{\partial\mathbf{A}} \left.(\mathbf{A}_{ij},\hat{u}(\xi-t_j,u^{RP}_{\Delta x}(\xi,x)))\cdot\mathbf{A}_{\Delta x}'\right\}dxdt\\
&=O(\Delta x^2) - \sum^{i=n+1}_{i=1,j}\int_{R_i}\left\{\varphi(t_{j+1},x)\int^{t_{j+1}}_{t_j} \left[g(\mathbf{A}_{ij},\hat{u}(\xi-t_j,u^{RP}_{\Delta x}(\xi,x)),x) \right.\right.\\
&\phantom{444444444}- \frac{\partial f}{\partial\mathbf{A}}\left.(\mathbf{A}_{ij},\hat{u}(\xi-t_j,u^{RP}_{\Delta x}(\xi,x)))\cdot\mathbf{A}_{\Delta x}'\right]d\xi\\
&\phantom{444444444}-\left.\int^{t_{j+1}}_{t_j}\varphi[g(\mathbf{A}_{ij},u_{\Delta x},x) - \frac{\partial f}{\partial\mathbf{A}}(\mathbf{A}_{ij},u_{\Delta x})\cdot\mathbf{A}_{\Delta x}']d\xi\right\}dx\\
&=O(\Delta x^2)-\sum^{i=n+1}_{i=1,j}\int_{R_i}\left\{\varphi(t_{j+1},x)\int^{t_{j+1}}_{t_j}\left[ g(\mathbf{A}_{ij},\hat{u}(\xi-t_j,u^{RP}_{\Delta x}(t_{j+1},x)),x)\right.\right.\\
&\phantom{444444444}-\left.\frac{\partial f}{\partial\mathbf{A}}\left.(\mathbf{A}_{ij},\hat{u}(\xi-t_j,u^{RP}_{\Delta x}(t_{j+1},x)))\cdot\mathbf{A}_{\Delta x}'\right]d\xi\right\}dt+I_4+I_5,
\end{split}
\end{equation}
where
\begin{equation}
\begin{split}
I_4\equiv\sum^{i=n+1}_{i=1,j}&\int_{R_i}\left\{\varphi(t_{j+1},x)\int^{t_{j+1}}_{t_j}\left[ g(\mathbf{A}_{ij},\hat{u}(\xi-t_j,u^{RP}_{\Delta x}(t_{j+1},x)),x)\right.\right.\\
&- g(\mathbf{A}_{ij},\hat{u}(\xi-t_j,u^{RP}_{\Delta x}(\xi,x)),x) - \frac{\partial f}{\partial\mathbf{A}}(\mathbf{A}_{ij},\hat{u}(\xi-t_j,u^{RP}_{\Delta x}(t_{j+1},x)))\cdot\mathbf{A}_{\Delta x}'\\
&+ \left.\frac{\partial f}{\partial\mathbf{A}}\left.(\mathbf{A}_{ij},\hat{u}(\xi-t_j,u^{RP}_{\Delta x}(\xi,x)))\cdot\mathbf{A}_{\Delta x}'\right]d\xi\right\}dx,
\end{split}
\end{equation}
and
\begin{equation}
I_5\equiv\sum^{i=n+1}_{i=1,j}\int\int_{R_{ij}}\varphi\left[g(\mathbf{A}_{ij},u_{\Delta x},x)-\frac{\partial f}{\partial\mathbf{A}}(\mathbf{A}_{ij},u_{\Delta x})\cdot\mathbf{A}_{\Delta x}'\right]dxdt.
\end{equation}
Again by smoothness and the total variation bound, we have
\begin{equation}
\begin{split}
\AutoAbs{I_4}&\leq\AutoNorm{\varphi}_\infty\sum^{i=n+1}_{i=1,j}C\phantom{4}T.V._{[x_{i-1},x_i]}\left\{u_{\Delta x}(\cdot,t_j)\right\}\Delta x\Delta t\\
&\leq\AutoNorm{\varphi}_\infty C\Delta x\Delta t\sum_jT.V._{[r_{min},r_{max}]}\left\{u_{\Delta x}(\cdot,t_j)\right\}
=\AutoNorm{\varphi}_\infty CV\Delta x\Delta t\frac{T}{\Delta t}=O(\Delta x).
\end{split}
\end{equation}
Substituting (\ref{eliminate_i2}) and (\ref{splitting_i1_ij}) into (\ref{intro_epsilon1}) along with using (\ref{frac_step_approx_soln}) as an identity leaves us with
\begin{equation}\label{f_jumps_cancel}
\begin{split}
\varepsilon=&\phantom{4}O(\Delta x) + \varepsilon_1 - \sum^{i=n+1}_{i=1,j}\int\int_{R_{ij}}\varphi\frac{\partial f}{\partial\mathbf{A}}(\mathbf{A}_{ij},u_{\Delta x})\cdot\mathbf{A}_{\Delta x}'dxdt\\
&+\sum^{i=n}_{i=1,j}\int_{R_j}\varphi(t,x_{i})\left\{f(\mathbf{A}_{i+1,j},u^{RP}_{\Delta x}(t,x_{i})) - f(\mathbf{A}_{ij},u^{RP}_{\Delta x}(t,x_{i}))\right\}dt
\end{split}
\end{equation}
The second sum represents the jump in the flux function $f$, resulting from the discontinuities in the metric $\mathbf{A}$, and the first sum is the addition to the ODE step (\ref{ode_soln}) specifically designed to cancel these jumps in the flux.

To see how the cancelation works, we perform a Taylor expansion on the test function, and we add and subtract terms deviating by order $\Delta x$.  The first sum in (\ref{f_jumps_cancel}) is expanded as
\begin{equation}\label{df_da_expansion}
\begin{split}
\sum^{i=n+1}_{i=1,j}\int\int_{R_{ij}}&\varphi\frac{\partial f}{\partial\mathbf{A}}(\mathbf{A}_{ij},u_{\Delta x})\cdot\mathbf{A}_{\Delta x}'dxdt\\
&=\sum^{i=n+1}_{i=1,j}\int\int_{R_{ij}}\varphi(x_{i},t)\frac{\partial f}{\partial\mathbf{A}}(\mathbf{A}_{ij},u_{\Delta x})\cdot\mathbf{A}_{\Delta x}'dxdt+O(\Delta x)\\
&=\sum^{i=n+1}_{i=1,j}\int_{R_j}\varphi(x_{i},t)\int_{R_i}\left\{\frac{\partial f}{\partial\mathbf{A}}(\mathbf{A}_{ij},u_{\Delta x})\cdot\mathbf{A}_{\Delta x}'-\frac{\partial f}{\partial\mathbf{A}}(\mathbf{A}_{ij},u^{RP}_{\Delta x})\cdot\mathbf{A}_{\Delta x}'\right\}dxdt\\
&+\sum^{i=n+1}_{i=1,j}\int_{R_j}\varphi(x_{i},t)\int_{R_i}\left\{\frac{\partial f}{\partial\mathbf{A}}(\mathbf{A}_{ij},u^{RP}_{\Delta x})\cdot\mathbf{A}_{\Delta x}'-\frac{\partial f}{\partial\mathbf{A}}(\mathbf{A}_{ij},u^{RP}_{\Delta x}(x_i,t))\cdot\mathbf{A}_{\Delta x}'\right\}dxdt\\
&+\sum^{i=n+1}_{i=1,j}\int_{R_j}\varphi(x_{i},t)\int_{R_i}\left\{\frac{\partial f}{\partial\mathbf{A}}(\mathbf{A}_{ij},u^{RP}_{\Delta x}(x_i,t))\cdot\mathbf{A}_{\Delta x}'\right.\\
&\phantom{4444444444}\left.-\frac{\partial f}{\partial\mathbf{A}}(\mathbf{A}_{\Delta x}(x+\frac{\Delta x}{2},t_j),u^{RP}_{\Delta x}(x_i,t))\cdot\mathbf{A}_{\Delta x}'\right\}dxdt\\
&+\sum^{i=n+1}_{i=1,j}\int_{R_j}\varphi(x_{i},t)\int^{x_i}_{x_{i-1}}\frac{\partial f}{\partial\mathbf{A}}(\mathbf{A}_{\Delta x}(x+\frac{\Delta x}{2},t_j),u^{RP}_{\Delta x}(x_i,t))\cdot\mathbf{A}_{\Delta x}'dxdt+O(\Delta x)
\end{split}
\end{equation}
From the smoothness of $f$, each of the first three sums in equation (\ref{df_da_expansion}) are $O(\Delta x)$ for the following reasons: the first sum is order $\Delta x$ from the ODE step in the definition of the approximate solution $u_{\Delta x}$ (\ref{frac_step_approx_soln}), the second sum is order $\Delta x^2$ by the total variation bound on solutions to the Riemann problems, and the third sum is order $\Delta x$ by the Lipschitz continuity of the metric $\mathbf{A}$.  After these bounds are established, (\ref{df_da_expansion}) reduces to
\begin{equation}\label{df_da_result}
\begin{split}
\sum^{i=n+1}_{i=1,j}&\int\int_{R_{ij}}\varphi\frac{\partial f}{\partial\mathbf{A}}(\mathbf{A}_{ij},u_{\Delta x})\cdot\mathbf{A}_{\Delta x}'dxdt\\
&=\sum^{i=n+1}_{i=1,j}\int_{R_j}\varphi(x_{i},t)\int^{x_i}_{x_{i-1}}\frac{\partial f}{\partial\mathbf{A}}(\mathbf{A}_{\Delta x}(x+\frac{\Delta x}{2},t_j),u^{RP}_{\Delta x}(x_i,t))\cdot\mathbf{A}_{\Delta x}'dxdt+O(\Delta x)\\
&=\sum^{i=n+1}_{i=1,j}\int_{R_j}\varphi(x_{i},t)\int^{x_i}_{x_{i-1}}\frac{\partial f}{\partial x}(\mathbf{A}_{\Delta x}(x+\frac{\Delta x}{2},t_j),u^{RP}_{\Delta x}(x_i,t))dxdt+O(\Delta x)\\
&=\sum^{i=n+1}_{i=1,j}\int_{R_j}\varphi(t,x_{i})\left\{f(\mathbf{A}_{i+1,j},u^{RP}_{\Delta x}(t,x_{i+})) - f(\mathbf{A}_{ij},u^{RP}_{\Delta x}(t,x_{i+}))\right\}dt + O(\Delta x).
\end{split}
\end{equation}
Plugging this result (\ref{df_da_result}) into (\ref{f_jumps_cancel}) gives us
\begin{equation}
\begin{split}
\varepsilon=&\phantom{4}O(\Delta x) + \varepsilon_1\\
&-\sum_{j}\int_{R_j}\varphi(t,x_{n+1})\left\{f(\mathbf{A}_{n+2,j},u^{RP}_{\Delta x}(t,x_{n+1})) - f(\mathbf{A}_{n+1,j},u^{RP}_{\Delta x}(t,x_{n+1}))\right\}dt,
\end{split}
\end{equation}
where one term remains due to the mismatch in the number of terms in the spatial sum.  Clearly, this last term is $O(\Delta x)$.

So the residual boils down to
\begin{equation}
\varepsilon(u_{\Delta x},\mathbf{A}_{\Delta x},\varphi)=\varepsilon_1(u_{\Delta x},\mathbf{A}_{\Delta x},\varphi) + O(\Delta x),
\end{equation}
with all that remains to show is
\begin{equation}
\varepsilon_1=\sum_{j\neq0}\int^{r_{max}}_{r_{min}}\varphi(t_j,x)\left\{u_{\Delta x}(t^+_j,x) - u_{\Delta x}(t^-_j,x)\right\}dx=O(\Delta x).
\end{equation}

To estimate $\varepsilon_1$, we break up the sum by each time step $t_j$ and define
\begin{equation}\label{epsilon_j_1_defn}
\begin{split}
\varepsilon^j_1&\equiv\int^{r_{max}}_{r_{min}}\varphi(t_j,x)\left\{u_{\Delta x}(t^+_j,x) - u_{\Delta x}(t^-_j,x)\right\}dx\\
&=\sum_i\int^{x_{i+}}_{x_i-}\varphi(t_j,x)\left\{u_{\Delta x}(t^+_j,x) - u_{\Delta x}(t^-_j,x)\right\}dx,
\end{split}
\end{equation}
with $x_{i+}\equiv x_{i+\frac{1}{2}}$ and $x_{i-}\equiv x_{i-\frac{1}{2}}$.

Recall from \cite{voglte}, the approximate solution for the new time step $t^+_j$ is computed by the Godunov step, using averages at the top of each Riemann cell $R_{ij}$.  In particular, the solution at each new time step is
\begin{equation}
u_{\Delta x}(t^+_j,x)\equiv\hat{u}(t_j-t_{j-1},\bar{u}(t_j),x))
\end{equation}
where
\begin{equation}
\bar{u}(t_j)\equiv\frac{1}{\Delta x}\int^{x_{i+}}_{x_i-}u^{RP}_{\Delta x}(t_j,x)dx
\end{equation}

To finish the proof, a lemma is needed, which is proven at the end of this section.  This lemma states the difference of the ODE step taken on an average verses the solution to the Riemann problem across the top of the Riemann cell is bounded by the total variation of the Riemann problem.
\begin{lem}\label{lem:avg_vs_rp_estimate}
Let $u^{RP}_{\Delta x}$ represent the solution of the Riemann problem in the Riemann cell $R_{i,j-1}$ and $\bar{u}_{\Delta x}(t)$ denote the average of the Riemann problem solution across Riemann cell.  Let $\hat{u}$ be the solution obtained by the ODE step (\ref{ode_soln}) and $\varphi$ be a smooth test function.  Then the following bound holds
\begin{equation}\label{avg_vs_rp_ineq}
\begin{split}
&\AutoAbs{\int^{x_{i+}}_{x_{i-}}\left\{\hat{u}(t_j-t_{j-1},\bar{u}_{\Delta x}(t_j),x)-\hat{u}(t_j-t_{j-1},u^{RP}_{\Delta x}(t_j,x),x)\right\}\varphi(t_j,x)dx}\\
&\phantom{44444444444444}\leq\phantom{4} C\AutoNorm{\varphi}_\infty\Delta x\Delta t \phantom{4}T.V._{[x_i,x_{i+1}]}\{u_{\Delta x}(t_j,\cdot)\}
\end{split}
\end{equation}
for some constant C.
\end{lem}
Using Lemma \ref{lem:avg_vs_rp_estimate}, (\ref{epsilon_j_1_defn}) is rewritten as solutions to the ODE step (\ref{ode_soln}) and bounded by
\begin{equation}
\begin{split}
\varepsilon^j_1&=\sum_i\int^{x_{i+}}_{x_{i-}}\varphi(t_j,x)\left\{ \hat{u}(t_j-t_{j-1},\bar{u}(t_j),x) - \hat{u}(t_j-t_{j-1},u^{RP}_{\Delta x}(t_j,x),x)\right\}dx\\
&\leq C\AutoNorm{\varphi}_\infty\Delta x\Delta t\sum_i T.V._{[x_{i-},x_{i+}]}\{u^{RP}_{\Delta x}(\cdot,t_j)\} = C\AutoNorm{\varphi}_\infty\Delta x\Delta t \phantom{4}T.V._{[r_{min},r_{max}]}\{u^{RP}_{\Delta x}(\cdot,t_j)\}
\end{split}
\end{equation}
By the total variation bound on $u_{\Delta x}(t_j,\cdot)$, the residual is bounded by
\begin{equation}
\varepsilon_1\leq\sum_{j\neq0}C\AutoNorm{\varphi}_\infty\Delta x\Delta t \phantom{4}T.V._{[r_{min},r_{max}]}\{u^{RP}_{\Delta x}(\cdot,t_j)\} \leq C\frac{T}{\Delta t}\Delta x\Delta tV = O(\Delta x).
\end{equation}
Therefore, $\varepsilon=O(\Delta x)$ and the proof is complete.
\end{proof}

To prove Lemma \ref{lem:avg_vs_rp_estimate}, a preliminary result is needed: given a function on a set of points the difference of the function between any point and the average is bounded by the total variation of that function on the set.  This result is provided by the following
\begin{lem}\label{lem:avg_bound}
Let $u(x)$ be a function on the set $[x_{i-},x_{i+}]$ and
\begin{equation}\label{avg_defn}
\bar{u}=\frac{1}{\Delta x}\int^{x_{i+}}_{x_{i-}}u(x)dx
\end{equation}
be the average of $u$ on this set.  Then we have
\begin{equation}\label{avg_bnded_by_tv}
|\bar{u}-u(x)|\leq\sup_{x_1,x_2\in[x_i,x_{i+1}]}|u(x_1)-u(x_2)|\leq T.V._{[x_{i-},x_{i+}]}\{u(\cdot)\}.
\end{equation}

\end{lem}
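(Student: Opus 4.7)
The proof is essentially a two-line computation once the right algebraic rewriting is made, so the plan is short. First I would write the difference $\bar u - u(x)$ as a single integral by using the definition of the average: since $\bar u = \frac{1}{\Delta x}\int_{x_{i-}}^{x_{i+}} u(y)\,dy$ and the constant function $u(x)$ also averages to $u(x)$ over an interval of length $\Delta x$, we get
\begin{equation*}
\bar u - u(x) \;=\; \frac{1}{\Delta x}\int_{x_{i-}}^{x_{i+}} \bigl(u(y) - u(x)\bigr)\,dy .
\end{equation*}

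Next I would take absolute values inside the integral and estimate pointwise. For each $y \in [x_{i-},x_{i+}]$ one has $|u(y) - u(x)| \le \sup_{x_1,x_2 \in [x_{i-},x_{i+}]} |u(x_1) - u(x_2)|$, and averaging a constant upper bound over an interval of length $\Delta x$ returns that same constant. This immediately gives the first inequality in (\ref{avg_bnded_by_tv}). (Equivalently, one may simply observe that $\bar u$ lies between $\inf_{[x_{i-},x_{i+}]} u$ and $\sup_{[x_{i-},x_{i+}]} u$, since it is a convex combination of the values of $u$, so $|\bar u - u(x)|$ cannot exceed the oscillation of $u$ on the interval.)

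For the second inequality I would invoke the defining property of total variation: for any finite collection of points $x_{i-} \le y_0 < y_1 < \cdots < y_N \le x_{i+}$ one has $\sum_k |u(y_k) - u(y_{k-1})| \le T.V._{[x_{i-},x_{i+}]}\{u\}$. Applying this with $N=1$ to an arbitrary pair $x_1,x_2$ in the interval and then taking the supremum over such pairs yields
\begin{equation*}
\sup_{x_1,x_2 \in [x_{i-},x_{i+}]} |u(x_1) - u(x_2)| \;\le\; T.V._{[x_{i-},x_{i+}]}\{u(\cdot)\},
\end{equation*}
which is the second half of (\ref{avg_bnded_by_tv}). There is no real obstacle here; the only mild subtlety is a notational one, namely reconciling the intervals $[x_{i-},x_{i+}]$ appearing in the definition of $\bar u$ and the interval $[x_i,x_{i+1}]$ appearing in the statement of the supremum, but since both have length $\Delta x$ and the argument is purely about oscillation on the cell of integration, the estimate is unaffected.
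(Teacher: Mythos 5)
Your proof is correct, but it takes a genuinely different route from the paper. The paper proves the first inequality in (\ref{avg_bnded_by_tv}) by contradiction: it assumes some $x_*$ violates the bound, applies an isometry $\Phi$ of the $u$-space that sends $u(x_*)$ to the origin (note that $u$ here is vector-valued --- the paper's figure shows coordinates $u_1,u_2$), observes that averaging commutes with the isometry so $\bar v = \Phi(\bar u)$, and then derives the contradiction $|\bar v| \le \frac{1}{\Delta x}\int |v| < |\bar v|$. Your argument is direct: writing $\bar u - u(x) = \frac{1}{\Delta x}\int (u(y)-u(x))\,dy$ and pulling the norm inside the integral gives the oscillation bound in one step. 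Both proofs ultimately rest on the same fact, $\left|\int f\right| \le \int |f|$, but yours avoids the contradiction and the normalization entirely and is the cleaner of the two; importantly, your main argument remains valid for vector-valued $u$, which is the setting actually needed. The one caveat is your parenthetical alternative (that $\bar u$ lies between $\inf u$ and $\sup u$ as a convex combination): that reasoning is only meaningful for scalar $u$ and should not be relied on here, though since it is offered only as an aside it does not affect the validity of your proof. Your handling of the second inequality (a two-point partition in the definition of total variation) matches the paper's, and your remark about the notational mismatch between $[x_{i-},x_{i+}]$ and $[x_i,x_{i+1}]$ correctly identifies a typo in the statement rather than a mathematical issue.
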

\begin{proof}
The second inequality is true by the definition of the total variation
\begin{equation}
\sup_{x_1,x_2\in[x_i,x_{i+1}]}|u(x_1)-u(x_2)|\leq T.V._{[x_{i-},x_{i+}]}\{u(\cdot)\}.
\end{equation}
To prove the first inequality, we assume it is false to obtain a contradiction, so suppose there exists $x_*\in[x_{i-},x_{i+}]$ such that
\begin{equation}\label{avg_inequality}
\sup_{x_1,x_2\in[x_i,x_{i+1}]}|u(x_1)-u(x_2)|<|\bar{u}-u(x_*)|.
\end{equation}
Relabel the u-coordinates by an isometry $\varphi:u\rightarrow v$ that maps the point $u(x_*)$ to the origin in the v-coordinates (i.e. $\varphi(u(x_*)) = \mathbf{0}$), and the vector $\bar{u}-u(x_*)$ in the direction of the 1st coordinate $v^1$, as show in Figure \ref{fig:avg_map}.

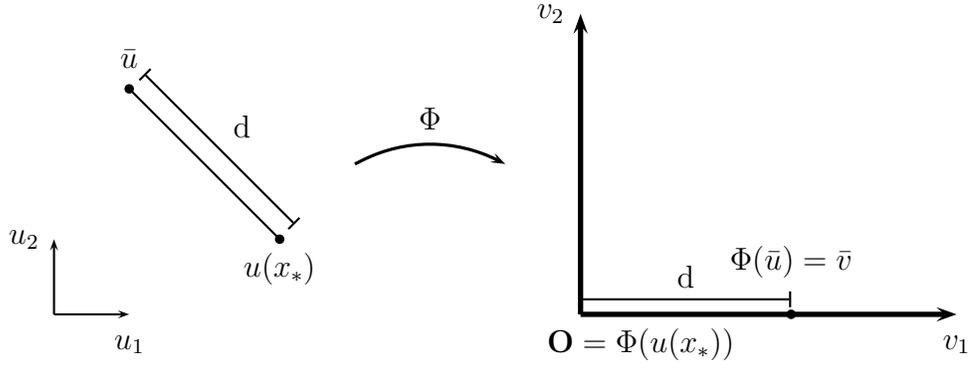
\begin{figure}[!h]
\begin{pspicture}(12,4)(0,-0.5)
%\psgrid
%u1 and u2 axises
\psline{->}(0,0)(1,0)
\psline{->}(0,0)(0,1)
%label the axises
\rput(-0.4,1){$u_2$}
\rput(1,-0.4){$u_1$}
%draw line from avg to arbitrary point and labels
\psline{*-*}(1,3)(3,1)
\psline{|-|}(1.2,3.2)(3.2,1.2)
\rput(1,3.4){$\bar{u}$}
\rput(3,0.6){$u(x_*)$}
\rput(2.5,2.5){d}
\psdots(9.8,0)
\psline{|-|}(7,0.2)(9.8,0.2)
\rput(8.4,0.5){d}
\rput(7.8,-0.4){$\mathbf{O}=\Phi(u(x_*))$}
\rput(9.8,0.7){$\Phi(\bar{u})=\bar{v}$}
%draw the map arrow and label
\psline[linewidth=1.2pt,linearc=2]{->}(4,2)(5,2.5)(6,2)
\rput(5,2.6){$\Phi$}
%v1 and v2 axises
\psline[linewidth=2pt]{->}(7,0)(12,0)
\psline[linewidth=2pt]{->}(7,0)(7,4)
%label the axises
\rput(6.6,4){$v_2$}
\rput(12,-0.4){$v_1$}
\end{pspicture}\caption{The isometry $\Phi:u\rightarrow v$}
\label{fig:avg_map}
\end{figure}

Since the average of a collection of points is independent of the coordinate system in which they are labeled in, we have
\begin{equation}
\bar{v}\equiv\frac{1}{\Delta x}\int^{x_{i+}}_{x_{i-}}v(x)dx=\frac{1}{\Delta x}\int^{x_{i+}}_{x_{i-}}\varphi(u(x))dx=\varphi\left(\frac{1}{\Delta x}\int^{x_{i+}}_{x_{i-}}u(x)dx\right)=\varphi(\bar{u})
\end{equation}
The following inequality holds by transforming equation (\ref{avg_inequality}) over to v-coordinates
\begin{equation}
|v(x)|=|u(x)-u(x_*)|<|\bar{u}-u(x_*)|=|\bar{v}|\phantom{4444}\forall x\in[x_{i-},x_{i+}],
\end{equation}
which implies
\begin{equation}
|\bar{v}|=\AutoAbs{\frac{1}{\Delta x}\int^{x_{i+1}}_{x_i}v(x)dx}\leq \frac{1}{\Delta x}\int^{x_{i+1}}_{x_i}|v(x)|dx< \frac{1}{\Delta x}\int^{x_{i+1}}_{x_i}|\bar{v}|dx =|\bar{v}|.
\end{equation}
This inequality $|\bar{v}|<|\bar{v}|$ is an obvious contradiction, proving the first inequality in (\ref{avg_bnded_by_tv}).
\end{proof}
Now we prove the lemma used in the proof of Theorem \ref{thm:weak_soln}

\vspace{.2cm}
\noindent{\bf Proof of Lemma \ref{lem:avg_vs_rp_estimate}}
Recall the solution to the ODE step has the form:
\begin{equation}
\hat{u}(t_j-t_{j-1},u^{RP}_{\Delta x}(t_j,x),x)=u^{RP}_{\Delta x}(t,x)+\int^{t_j}_{t_{j-1}}G(\mathbf{A}_{ij},u^{RP}_{\Delta x}(t,x),x)dt.
\end{equation}
This solution implies the LHS of (\ref{avg_vs_rp_ineq}) is written out as
\begin{equation}
\begin{split}
&\AutoAbs{\int^{x_{i+}}_{x_{i-}}\left\{\hat{u}(t_j-t_{j-1},\bar{u}_{\Delta x}(t_j),x)-\hat{u}(t_j-t_{j-1},u^{RP}_{\Delta x}(t_j,x),x)\right\}\varphi(t_j,x)dx}\\
=&\left\vert\int^{x_{i+}}_{x_{i-}}\left\{(\bar{u}_{\Delta x}(t_j)-u^{RP}_{\Delta x}(t_j,x))\right.\right. \\
&\left.+\int^{t_j}_{t_{j-1}}\left.(G(\mathbf{A}_{ij},\bar{u}_{\Delta x}(t),x)-G(\mathbf{A}_{ij},u^{RP}_{\Delta x}(t,x),x))dt\right\}\varphi(t_j,x)dx\right\vert \\
=&\left\vert\int^{x_{i+}}_{x_{i-}}\left\{\bar{u}_{\Delta x}(t_j)-u^{RP}_{\Delta x}(t_j,x)\right\}\varphi(t_j,x_i)dx\right.\\ &\left.+\int^{x_{i+}}_{x_{i-}}\int^{t_j}_{t_{j-1}}\left\{G(\mathbf{A}_{ij},\bar{u}_{\Delta x}(t_j),x)- G(\mathbf{A}_{ij},u^{RP}_{\Delta x}(t_j,x),x)\right\}dt\phantom{4}\varphi(t_j,x_i)dx\right\vert \\
& + O(\Delta x^2),
\end{split}
\end{equation}
where the test function in the first term is approximated by a Taylor expansion.  By the definition of the average function $\bar{u}$, the first term is zero.  By the smoothness of $G$, the bound (\ref{avg_vs_rp_ineq}) is proven by
\begin{equation}
\begin{split}
&\AutoAbs{\int^{x_{i+}}_{x_{i-}}\left\{\hat{u}(t_j-t_{j-1},\bar{u}_{\Delta x}(t_j),x)-\hat{u}(t_j-t_{j-1},u^{RP}_{\Delta x}(t_j,x),x)\right\}\varphi(t_j,x)dx}\\
&\phantom{4444}\leq\phantom{4}C\AutoNorm{\varphi}_\infty\Delta x\Delta t\sup_{x_{i-}<x<x_{i+}}\{\AutoAbs{\bar{u}_{\Delta x}(t_j)-u^{RP}_{\Delta x}(t_j,x)}\}\\
&\phantom{4444}\leq\phantom{4}C\AutoNorm{\varphi}_\infty\Delta x\Delta t \phantom{4}T.V._{[x_{i-},x_{i+}]}\{u_{\Delta x}(t_j,\cdot)\},
\end{split}
\end{equation}
where Lemma \ref{lem:avg_bound} is used to bound the difference between the average and the solution to the Riemann problem.
\begin{flushright}
$\Box$
\end{flushright}

\end{document}